\newcommand{\cF}{\mathcal{F}}
\newcommand{\cP}{\mathcal{P}}
\newcommand{\E}{\mathbb E}
\newcommand{\R}{\mathbb{R}}
\newcommand{\N}{\mathbb{N}}
\renewcommand{\P}{\mathbb{P}}
\newcommand{\Vol}{\mathop{\mathrm{Vol}}\nolimits}
\newcommand{\conv}{\mathop{\mathrm{conv}}\nolimits}
\newcommand{\pos}{\mathop{\mathrm{pos}}\nolimits}
\newcommand{\lin}{\mathop{\mathrm{lin}}\nolimits}
\newcommand{\aff}{\mathop{\mathrm{aff}}\nolimits}
\newcommand{\ind}{\mathbbm{1}}
\newcommand{\eee}{{\rm e}}
\newcommand{\Nor}{{\rm Nor}}
\theoremstyle{plain}
\newtheorem{theorem}{Theorem}[section]
\newtheorem{lemma}[theorem]{Lemma}
\theoremstyle{definition}
\theoremstyle{remark}
\newtheorem{remark}[theorem]{Remark}
\begin{document}

\author{Zakhar Kabluchko}
\address{Zakhar Kabluchko: Institut f\"ur Mathematische Stochastik,
Westf\"alische Wilhelms-Universit\"at M\"unster,
Orl\'eans-Ring 10,
48149 M\"unster, Germany}
\email{zakhar.kabluchko@uni-muenster.de}

\author{Christoph Th\"ale}
\address{Christoph Th\"ale: Fakult\"at f\"ur Mathematik,
Ruhr-Universit\"at Bochum,
44780 Bochum, Germany}
\email{christoph.thaele@rub.de}


\title[Projections of regular polytopes]{Monotonicity of expected $f$-vectors\\ for projections of regular polytopes}

\keywords{Convex hull, Gaussian polytope, Gaussian zonotope, Goodman--Pollack model, $f$-vector, random polytope, regular polytope}


\subjclass[2010]{Primary: 52A22, 60D05; Secondary: 52B11,  52A20, 51M20}

\begin{abstract}
Let $P_n$ be an $n$-dimensional regular polytope from one of the three infinite series (regular simplices, regular crosspolytopes, and cubes). Project $P_n$ onto a random, uniformly distributed linear subspace of dimension $d\geq 2$.  We prove that the expected number of $k$-dimensional faces of the resulting random polytope is an increasing function of $n$. As a corollary, we show that the expected number of $k$-faces of the Gaussian polytope is an increasing function of the number of points used to generate the polytope. Similar results are obtained for the symmetric Gaussian polytope  and the Gaussian zonotope.
\end{abstract}

\maketitle

\section{Introduction}

Sample $n$ independent random points $X_1,\ldots,X_{n}$ from the standard Gaussian distribution on the $d$-dimensional Euclidean space $\R^d$. Their convex hull
$$
\cP_{n,d}
:=
\conv(X_1,\ldots,X_n) =
\left\{\sum_{i=1}^n \lambda_i X_i \colon \lambda_1,\ldots,\lambda_n \geq 0,  \sum_{i=1}^n \lambda_i = 1\right\}
$$
is called the \textit{Gaussian polytope}.

Given a deterministic or random polytope $P$ of dimension $\dim(P)$ we write $\cF_k(P)$ for the collection of its faces of dimension $k\in\{0,\ldots,\dim(P)\}$ ($k$-faces). The number of $k$-faces, i.e.\ the cardinality of the set $\cF_k(P)$, is denoted by $f_k(P)$. In particular, $f_0(P)$ is the number of vertices, $f_1(P)$ is the number of edges, $f_{\dim (P)-1}(P)$ is the number of facets of $P$, and $f_{\dim(P)}(P)=1$. We also put $f_k(P)=0$ for $k>\dim(P)$ and $k<0$.

In the present note we study what happens to $f_k(\cP_{n,d})$ if one further independent random point $X_{n+1}$ is added to the sample, i.e., if we pass from $\cP_{n,d}$ to $\cP_{n+1,d}$. If $X_{n+1}$ is contained in $\cP_{n,d}$, then $\cP_{n,d}=\cP_{n+1,d}$. However, if $X_{n+1}$ falls outside $\cP_{n,d}$, then $\cP_{n+1,d}$ is strictly larger than $\cP_{n,d}$ and the number of $k$-faces changes in a way which is difficult to control. For example, in general it is \textit{not} true that
$$
f_k(\cP_{n,d})  \leq f_k(\mathcal P_{n+1,d}) \quad \text{a.s.}
$$
Still, it is natural to conjecture that the expectation behaves in a monotone way, namely
\begin{equation}\label{eq:monotonicity_conj}
\E f_k(\mathcal P_{n,d}) \leq  \E f_k(\mathcal P_{n+1,d}).
\end{equation}
It should be pointed out that the expectations $\E f_k(\mathcal P_{n,d})$ cannot be accessed directly, since the probabilities that the random variables $f_k(\cP_{n,d})$ take some particular value are not known.

In her PhD thesis, Beermann~\cite[Theorem~5.3.1]{beermann_diss} proved~\eqref{eq:monotonicity_conj} for $k=d-1$ by an approach based on the Blaschke--Petkantschin formula~\cite[Chapter~7.2]{SW08}. Very recently,  Bonnet, Grote, Temesvari, Th\"ale, Turchi, Wespi~\cite{bonnet_etal} extended Beermann's method to convex hulls of i.i.d.\ vectors with certain non-Gaussian distributions. We refer to~\cite{bonnet_etal} for more on the history of the problem which started with the work of Devillers, Glisse, Goaoc, Moroz and Reitzner~\cite{devillers} inspired by a question of V. H.\ Vu~\cite{vu_sharp_concentration}.

The aim of the present note is to prove~\eqref{eq:monotonicity_conj} for all $k\in\{0,\ldots,d-1\}$ (even in the stronger form of a strict inequality).  From the work of Baryshnikov and Vitale~\cite{baryshnikov_vitale} it is known that, up to an affine transformation, the distribution of the Gaussian polytope $\cP_{n,d}$ coincides with that of the projection of a regular simplex with $n$ vertices onto a uniformly distributed $d$-dimensional linear subspace. As a consequence, the expected number of $k$-faces for $\cP_{n,d}$ is the same as for the projection of a regular simplex with $n$ vertices onto a random, uniformly distributed linear subspace of dimension $d$. The latter random projection is usually referred to as the \textit{Goodman--Pollack model}~\cite{AS92}. Thus, our question on the Gaussian polytope can be reduced to the corresponding question on the projection of the regular simplex. Similarly, projections of other two regular polytopes from the infinite series (the crosspolytope and the cube) correspond to the symmetric Gaussian polytope and the Gaussian zonotope.
Explicit and asymptotic formulae for the expected number of $k$-faces in the models mentioned above were provided in~\cite{AS92,baryshnikov_vitale,vershik_sporyshev,donoho_tanner2,donoho_tanner1,boroczky_henk,HMR04}.

To state our main result, let $P_n\subset \R^N$ be the $n$-dimensional regular polytope from one of the three infinite series, that is, $P_n$ is a regular simplex, a regular crosspolytope or a cube. The dimension $N$ of the  Euclidean space containing $P_n$ may be any number not less than $n$ and plays only a minor role in the following. Let $\Pi_d P_n$ denote the projection of $P_n$ onto a random $d$-dimensional linear subspace chosen according to the uniform distribution on the Grassmannian manifold. The next theorem states that the expected number of $k$-faces of $\Pi_d P_n$ strictly increases with $n$.

\begin{theorem}\label{theo:main}
 For any of the three infinite series of regular polytopes we have
$$
\E f_k(\Pi_d P_n) < \E f_k(\Pi_d P_{n+1})
$$
for all $n\geq 1$, $d \geq 2$, and $0 \leq k <  \min (n,d)$.
\end{theorem}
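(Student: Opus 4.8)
The plan is to reduce the statement to a purely combinatorial comparison using a classical result relating the faces of a projected polytope to the faces of the polytope itself together with the position of the projection kernel. Specifically, a $k$-face $F$ of $P_n$ survives (maps to a $k$-face of $\Pi_d P_n$) essentially if and only if the affine hull of $F$, when pushed to the quotient $\R^N / L^\perp$ (equivalently, when the orthogonal complement of the projection is considered), still touches $\Pi_d P_n$ only along the image of $F$; by Affine Gale duality / the Baryshnikov--Vitale correspondence, this event depends only on the relative position of the orthogonal projection $L^\perp$ of dimension $N-d$ (or, dually, on a uniformly random linear subspace of complementary dimension) with respect to the normal fan of $P_n$. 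The upshot is that $\E f_k(\Pi_d P_n) = \sum_{F \in \cF_k(P_n)} \P[F \text{ survives}]$, where each survival probability is the probability that a uniformly random $d$-subspace meets a certain spherically convex region associated with $F$ in a prescribed way. Because $P_n$ has a transitive symmetry group on its $k$-faces, all these probabilities are equal, so $\E f_k(\Pi_d P_n) = f_k(P_n) \cdot p_{n,k,d}$ for a single number $p_{n,k,d}\in(0,1)$.

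The second step is to set up an explicit combinatorial embedding of $P_n$ into $P_{n+1}$ in each of the three series: a regular simplex on $n$ vertices is a face of the regular simplex on $n+1$ vertices; a regular $n$-crosspolytope is a (``equatorial'') section, and also sits inside the $(n+1)$-crosspolytope as the convex hull of $2n$ of its $2n+2$ vertices; the $n$-cube is a face of the $(n+1)$-cube. Under such an embedding, every $k$-face $F$ of $P_n$ is also a $k$-face of $P_{n+1}$, and I want to compare the survival probability of $F$ as a face of $P_n$ with its survival probability as a face of $P_{n+1}$. The key geometric point is that the normal cone of $F$ in $P_{n+1}$ is \emph{contained} in the normal cone of $F$ in $P_n$ (viewed in the appropriate ambient spaces after accounting for the extra coordinate): adding vertices to a polytope can only shrink normal cones of the surviving faces. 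Consequently the spherically convex ``survival region'' for $F$ shrinks, and a uniform random subspace of fixed dimension is less likely to hit the smaller region — but we want the expected $f$-vector to \emph{increase}, so the gain must come from the fact that $f_k(P_{n+1}) > f_k(P_n)$, i.e.\ there are strictly more faces to begin with, and one must show the per-face loss is more than compensated.

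To make that trade-off precise I would argue as follows. Fix a $k$-face $F$ of $P_{n+1}$ that is \emph{not} a face of the embedded copy of $P_n$ (such faces exist since $0\le k<\min(n,d)\le n$ and the embeddings above are proper). Partition $\cF_k(P_{n+1})$ into the images of $\cF_k(P_n)$ under the symmetry group, and use the transitivity to write $\E f_k(\Pi_d P_{n+1}) = f_k(P_{n+1}) p_{n+1,k,d}$. The cleanest route is an inclusion-based coupling: realize the random $d$-subspace for $P_{n+1}$ and condition on its trace on the subspace spanned by $P_n$'s affine hull, showing that, conditionally, the event that $F$ survives in $P_n$ contains the event that it survives in $P_{n+1}$, while simultaneously each ``new'' face of $P_{n+1}$ contributes a strictly positive survival probability. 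Summing, $\E f_k(\Pi_d P_{n+1}) \ge \E f_k(\Pi_d P_n) + (\text{strictly positive contribution of new faces})$, provided one checks the per-old-face terms do not decrease — which is exactly where the normal-cone containment could cut the wrong way, so one instead compares $\Pi_d P_{n+1}$ with $\Pi_d$ of a polytope interpolating between $P_n$ and $P_{n+1}$ and uses a local perturbation / continuity argument.

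The main obstacle I anticipate is precisely this sign issue: shrinking normal cones pushes the per-face survival probability \emph{down}, so a naive term-by-term comparison fails, and one genuinely needs the count $f_k(P_{n+1})$ to grow fast enough. The resolution should exploit the very rigid, highly symmetric structure of the three series — for the simplex this is the classical Goodman--Pollack / Affine-Vapnik-Chervonenkis count where the probabilities have closed forms in terms of internal and external angles of regular simplices, and one can compare ratios of these angles across $n$ directly; for the crosspolytope and cube one has the analogous angle-sum identities (Vershik--Sporyshev, Donoho--Tanner, Böröczky--Henk). So the real work is an explicit monotonicity estimate for a ratio of spherical-angle functionals of regular polytopes as $n$ grows, and verifying strictness by exhibiting at least one new $k$-face with positive survival probability (guaranteed since $k<d$).
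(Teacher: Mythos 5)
Your setup is sound --- writing $\E f_k(\Pi_d P_n)$ as a sum of survival probabilities, using the transitivity of the symmetry group, and recognizing that adding vertices shrinks the normal cones of old faces --- and you have correctly diagnosed the central difficulty: the per-face survival probability \emph{decreases} while the face count increases, so no term-by-term or coupling comparison along the embedding $P_n\subset P_{n+1}$ can work (your own ``inclusion-based coupling'' and the vague ``perturbation/continuity argument'' run into exactly the sign problem you name). But at the decisive moment the proposal stops: you say the real work is ``an explicit monotonicity estimate for a ratio of spherical-angle functionals'' of regular simplices, crosspolytopes and cubes, and you do not supply it. These internal and external angles admit only integral representations (except for the cube), and a direct comparison of such ratios across $n$ is precisely the hard analytic problem the paper avoids; deferring it leaves the proof with a genuine gap, and the strictness argument you sketch (one new face with positive survival probability) also fails on its own, since the decrease of the old faces' probabilities is not controlled.

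The missing idea is the following reduction. By the Affentranger--Schneider formula, $\E f_k(\Pi_d P_n)=2\sum_{s\ge 0}\sum_{F\in\cF_k(P_n)}\sum_{G\in\cF_{d-2s-1}(P_n)}\beta(F,G)\,\gamma(G,P_n)$, and by regularity each inner double sum collapses to $c_{n,j-1}\,c_{j-1,k}\,\beta(Q_{k},Q_{j-1})\,\gamma(Q_{j-1,n},P_n)$ with $j=d-2s$, where the internal-angle factor $c_{j-1,k}\,\beta(Q_{k},Q_{j-1})$ is \emph{independent of $n$} because a $(j-1)$-face of $P_n$ is a fixed regular simplex or cube of dimension $j-1<d\le n$. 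So monotonicity in $n$ only has to be checked for $c_{n,j-1}\,\gamma(Q_{j-1,n},P_n)$; multiplying by the ($n$-independent) volume $\Vol_{j-1}(Q_{j-1,n})$ and using $V_{j-1}(P)=\sum_{F\in\cF_{j-1}(P)}\gamma(F,P)\Vol_{j-1}(F)$, this quantity is exactly the intrinsic volume $V_{j-1}(P_n)$. The inequality $V_{j-1}(P_n)\le V_{j-1}(P_{n+1})$ is then immediate from monotonicity of intrinsic volumes under the inclusion $P_n\subset P_{n+1}$, with strict inequality for the top term $j=d$ (since $j-1<\dim P_n$), which yields the strictness of the theorem. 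In other words, the ``ratio of angle functionals'' you would need to estimate is not attacked head-on at all: the $n$-dependent part of the angle sum is identified with a single geometric quantity that is monotone for trivial reasons. Without this (or some equally effective replacement), your outline does not constitute a proof.
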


\begin{remark}\label{rem:non-strict}
Restrictions on $n,d,k$ are needed to make sure that the inequality is strict. The non-strict inequality holds for all $n\geq 1$, $d\geq 1$, $k\geq 0$. Indeed, for $d=1$ we trivially have $\E f_0(\Pi_1 P_n) =2= \E f_0(\Pi_1 P_{n+1})$ and $\E f_1(\Pi_1 P_n) =1= \E f_1(\Pi_1 P_{n+1})$, while  for $k>\min (n,d)$ we have $\E f_k(\Pi_d P_n)=0\leq \E f_k(\Pi_d P_{n+1})$. Finally, for $k=\min (n,d)$ we have $\E f_k(\Pi_d P_n)=1\leq \E f_k(\Pi_d P_{n+1})$.
\end{remark}


The rest of the paper is organized as follows. In Section~\ref{sec:Preliminaries} we introduce our notation and recall some facts about regular polytopes, internal and external angles as well as Grassmannians and intrinsic volumes. In Section~\ref{sec:projections} we  prove Theorem~\ref{theo:main}. Various corollaries and extensions (including the proof of~\eqref{eq:monotonicity_conj}) will be presented in Section~\ref{sec:extensions}.

\section{Some preliminaries}\label{sec:Preliminaries}

\subsection{Regular polytopes}\label{subsec:reg_poly}
We recall that a \textit{flag} of an $n$-dimensional polytope $P$ is a chain $F_0\subset F_1\subset\ldots \subset F_{n-1}$, where $F_k\in\cF_k(P)$ for all $k\in\{0,\ldots,n-1\}$. A polytope whose group of isometries acts transitively on the set of all its flags is usually called a \textit{regular polytope}; see \cite[Chapter 12.5]{BergerBook}. It is a well-known fact that there are precisely three infinite series of $n$-dimensional regular polytopes, the regular simplices, the regular crosspolytopes and the cubes. In the present paper we denote by $P_n$ the $n$-dimensional regular polytope from one of these three infinite series. Equivalently, we can define $P_n$ as an isometric copy of the following polytope:
\begin{equation}\label{eq:def_P_n}
\begin{cases}
\conv(e_1,\ldots,e_{n+1}) &\text{(regular simplex)},\\
\conv(e_1,\ldots,e_{n}, -e_1,\ldots, -e_n) &\text{(regular crosspolytope)},\\
[0,1]^n &\text{(cube)},
\end{cases}
\end{equation}
where $\conv(\,\cdot\,)$ denotes the convex hull and $e_1,e_2,\ldots$ are the vectors of the standard orthonormal basis.

\subsection{Internal and external angles}\label{subsec:angles}
The linear hull $\lin(A)$, the affine hull $\aff(A)$ and the positive hull $\pos(A)$ of a set $A$ are defined as the minimal linear subspace, affine subspace, or convex cone containing the set $A$. Roughly speaking, the angle of a convex cone $C$ in a Euclidean space is the fraction of the linear
subspace $\lin(C)$ occupied by $C$. More formally, the angle of $C$ is
defined as $\P[X_{\lin(C)} \in C]$, where $X_{\lin(C)}$ is a random vector with standard normal distribution on the linear  hull $\lin(C)$.

We shall next recall the concepts of external and internal angles of convex polytopes.
Let $P\subset \R^N$ be an $n$-dimensional polytope and let $G\in\cF_k(P)$ be a face of dimension $k\in\{0,\ldots,n\}$ of $P$. The relative interior of $G$ is the interior of $G$ taken with respect to the affine hull $\aff(G)$ as the ambient space. For a point $x$ in the relative interior of $G$ write $\Nor(P,x)$ for the convex cone of normal vectors of $P$ at $x$, that is
$$
\Nor(P,x) = \{u\in\R^N\colon \langle  u, p-x\rangle \leq 0 \text{ for all } p\in P\}.
$$
 Since $\Nor(P,x)$ is independent of the choice of $x$, we shall use the notation $\Nor(P,G)$ for this cone. The angle of this cone is called the \textit{external angle} $\gamma(G,P)$ of $P$ at its face $G$.


Next, we let $F$ be another face of $P$ such that $F\subset G$, that is, $F$ is a face of $G$.  For a point $x$ in the relative interior of $F$ we define the convex cone $A(x,G):=\pos(G-x)$. Again, this definition is independent of $x$ and we shall write $A(F,G)$ for this cone. Then, the \textit{internal angle} $\beta(F,G)$ of $F$ relative to $G$ is just the angle of this cone.
It is convenient to extend the definition to all pairs $(F,G)$ of faces of $P$ by putting $\beta(F,G):=0$ if $F$ is not a face of $G$.
We refer to~\cite[Chapter 14]{GruenbaumBook} for further details related to these concepts.

The internal and external angles are very difficult to compute in concrete situations. A rare exception is the case where $P=[0,1]^n$ is the $n$-dimensional cube. In this case, if $F\in\cF_k([0,1]^n)$, $G\in\cF_\ell([0,1]^n)$ and $F\subset G$  with $k\in\{0,\ldots,n\}$, $\ell\in\{k,\ldots,n\}$ then
$$
\gamma(G,P)=2^{-(n-\ell)}\qquad\text{and}\qquad\beta(F,G)=2^{-(\ell-k)}.
$$
An exact integral formula for the external angles of a regular simplex can be deduced from a volume formula for the regular \textit{spherical} simplex found in \cite{RubenTetrahedron} (see also \cite[Satz 6.5.3]{boehm_hertel_book} for a more accessible account). The asymptotic behaviour of the internal angles, as $n\to\infty$, is discussed in \cite{boroczky_henk}. For the regular crosspolytope we refer to \cite{baryshnikov_vitale,betke_henk,boroczky_henk} for exact and asymptotic formulas for the external and internal angles, respectively.

\subsection{Grassmannians and intrinsic volumes}\label{subsec:IntVol}
We let $G(n,k)$ be the Grassmannian of all $k$-dimensional linear subspaces of $\R^n$, $k\in\{0,\ldots,n\}$. It is well-known that $G(n,k)$ carries a natural structure of a smooth and compact $k(n-k)$-dimensional Riemannian manifold. The normalized $k(n-k)$-dimensional volume measure on $G(n,k)$ is denoted by $\nu_k$ and called the uniform distribution on $G(n,k)$; see~\cite[Chapter~5]{SW08}.

For an integer $\ell\geq 0$ denote the $\ell$-volume of the $\ell$-dimensional unit ball by
$$
\kappa_\ell:=\frac{\pi^{\ell/2}}{\Gamma\left(1+{\frac \ell 2}\right)}.
$$
The \textit{$k$-th intrinsic volume} $V_k(K)$ of a compact convex set $K\subset\R^n$ is defined as
$$
V_k(K) := \binom nk \frac{\kappa_n}{\kappa_k\kappa_{n-k}}\int_{G(n,k)}\Vol_k(K|L)\,\nu_k({\rm d} L),\qquad k\in\{0,\ldots,n\},
$$
where $\Vol_k(K|L)$ is the $k$-volume of the orthogonal projection $K|L$ of $K$ onto the linear subspace $L$. We emphasize that the intrinsic volumes are of outstanding importance in convex geometry since they form a basis of the vector space of all continuous (with respect to the usual Hausdorff distance), motion invariant valuations on the space of compact convex subsets of $\R^n$; see~\cite[Theorem 6.4.14]{SchneiderBook}. In particular, $V_0(K)=\ind_{\{K\neq\varnothing\}}$, $V_{1}(K)$ is a constant multiple of the mean width of $K$, $2V_{n-1}(K)$ is the same as the $(n-1)$-dimensional Hausdorff measure of the boundary of $K$ and $V_n(K)=\Vol_n(K)$ is the volume of $K$.

The $k$-th intrinsic volume of a polytope $P\subset\R^n$ admits the following neat interpretation in terms of the external angles introduced in Section~\ref{subsec:angles} at the $k$-dimensional faces of $P$ and their $k$-volumes:
\begin{equation}\label{eq:IntrinsicVolumePolytope}
V_k(P) = \sum_{F\in\cF_k(P)}\gamma(F,P)\,\Vol_k(F);
\end{equation}
see~\cite[Eq.~(4.23)]{SchneiderBook}.

The intrinsic volumes are monotone under set inclusion. That is, if $K$ and $K'$ are compact convex subsets of $\R^n$ with $K'\subseteq K$, then $V_k(K')\leq V_k(K)$ for all $k\in\{0,\ldots,n\}$. Moreover, the following strict form of monotonicity is known; see~\cite[Satz~16.7 on p.~185]{leichtweiss_book}  (where the result is stated for the so-called quermassintegrals $W_k$ which are constant multiples of the intrinsic volumes $V_{n-k}$, $k\in\{0,\ldots,n\}$). We say that a set $K\subset\R^n$ has dimension $d\in\{0,\ldots,n\}$ and write $\dim(K)=d$ if $d$ is the dimension of the affine hull of $K$.

\begin{lemma}\label{lem:Leichtweiss}
Let $K$ and $K'$ be compact convex subsets of $\R^n$ with $K'\subseteq K$ and $K\neq K'$. Then $V_k(K')=V_k(K)$ for some $k\in \{1,\ldots,n\}$ if and only if  $\dim(K)< k$.
\end{lemma}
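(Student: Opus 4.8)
The plan is to derive the lemma from the integral-geometric formula defining $V_k$ in Section~\ref{subsec:IntVol}, combined with the elementary strict monotonicity of $k$-volume for full-dimensional convex bodies and a short openness argument on the Grassmannian.

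First the ``if'' direction. If $\dim(K)<k$, then for every $L\in G(n,k)$ the orthogonal projections $K|L$ and $K'|L$ have dimension at most $\dim(K)<k$ (orthogonal projection does not increase dimension, and $K'\subseteq K$ gives $\dim(K')\leq\dim(K)$), so they lie in affine subspaces of $L$ of dimension $<k$ and therefore have zero $k$-dimensional volume. The defining formula for $V_k$ then gives $V_k(K)=0=V_k(K')$. This settles one direction and reduces the other to the claim that if $k\geq 1$, $\dim(K)\geq k$ and $K'\subsetneq K$, then $V_k(K')<V_k(K)$.

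For the claim, note first that $K'\subseteq K$ implies $\Vol_k(K'|L)\leq\Vol_k(K|L)$ for every $L\in G(n,k)$, so by the defining formula it suffices to exhibit a set of positive $\nu_k$-measure of subspaces $L$ on which this inequality is strict. I would first produce one such subspace $L_0$. Pick a point $x\in K\setminus K'$. Since $x$ and $K'$ both lie in the affine hull $\aff(K)$ and $K'$ is compact and convex, there is a direction $u\in\lin(K-K)\setminus\{0\}$ with $\langle u,x\rangle>\max_{y\in K'}\langle u,y\rangle$. Because $\dim\lin(K-K)=\dim(K)\geq k$, we may choose a $k$-dimensional subspace $L_0$ with $u\in L_0\subseteq\lin(K-K)$. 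Then $\aff(K|L_0)=\aff(K)|L_0=L_0$, the last equality because $L_0\subseteq\lin(K-K)$, so $\dim(K|L_0)=k$; and the choice of $u$ forces $(x+L_0^\perp)\cap K'=\varnothing$, i.e.\ $x|L_0\in(K|L_0)\setminus(K'|L_0)$, so that $K'|L_0$ is a proper compact convex subset of the $k$-dimensional body $K|L_0$. Since a $k$-dimensional compact convex body in a $k$-dimensional space has strictly larger $k$-volume than any proper closed subset of it (the difference contains a small ball around a suitable interior point of the larger body), we conclude $\Vol_k(K'|L_0)<\Vol_k(K|L_0)$.

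It remains to upgrade this to a positive-measure statement. The map $L\mapsto\Vol_k(K|L)-\Vol_k(K'|L)$ is continuous on $G(n,k)$: writing $L=\Theta L_1$ for a fixed reference subspace $L_1$ and $\Theta\in\OGroup(n)$, it equals $\Vol_k((\Theta^{-1}K)|L_1)-\Vol_k((\Theta^{-1}K')|L_1)$, and $\Theta\mapsto\Theta^{-1}K$ is continuous in the Hausdorff metric while orthogonal projection onto $L_1$ and $\Vol_k$ are continuous on compact convex sets. This map is nonnegative everywhere and strictly positive at $L_0$, hence strictly positive on an open---and therefore $\nu_k$-positive---neighbourhood of $L_0$. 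Integrating the defining formula for $V_k$ over this neighbourhood yields $V_k(K)-V_k(K')>0$, as required. I expect the main obstacle to lie in the construction of $L_0$, where the dimension bookkeeping has to be carried out carefully, together with the observation that the strict inequality $\Vol_k(K'|L)<\Vol_k(K|L)$ may fail on a large portion of $G(n,k)$, so that the final openness argument is genuinely needed rather than cosmetic.
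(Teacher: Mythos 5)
Your proof is correct. Note, however, that the paper itself does not prove this lemma: it is quoted from the literature (Leichtwei{\ss}, Satz~16.7, stated there for the quermassintegrals $W_k$), so there is no in-paper argument to compare against line by line. What you supply is a self-contained derivation from exactly the ingredients the paper sets up in Section~\ref{subsec:IntVol}: the Kubota-type defining formula for $V_k$, monotonicity of $\Vol_k$ under projection of nested sets, a strict separation of a point $x\in K\setminus K'$ by a functional $u\in\lin(K-K)$, the construction of a single subspace $L_0\ni u$ inside $\lin(K-K)$ on which $K|L_0$ is full-dimensional while $x|L_0\notin K'|L_0$, and finally continuity of $L\mapsto \Vol_k(K|L)-\Vol_k(K'|L)$ together with full support of $\nu_k$ to get a positive-measure set of subspaces with strict inequality. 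All of these steps check out: the separation within $\aff(K)$ is legitimate after translating so that $\aff(K)$ becomes a linear subspace, the identity $\aff(K|L_0)=\aff(K)|L_0=L_0$ is correct because $L_0\subseteq\lin(K-K)$, the strict volume inequality in $L_0$ only needs that the (compact convex) set $K'|L_0$ misses a relative neighbourhood of a relative interior point of $K|L_0$ near $x|L_0$, and the quotient-map argument via $\OGroup(n)$ gives the continuity you need. Two cosmetic remarks: the case $K'=\varnothing$ is trivial and worth a half-sentence, and your parenthetical claim about ``any proper closed subset'' is stronger than needed (here $K'|L_0$ is itself convex), but neither affects correctness. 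Your route has the advantage of using only the projection definition of $V_k$ actually given in the paper, rather than invoking the quermassintegral machinery behind the cited Satz.
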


\section{Projections of regular polytopes}\label{sec:projections}
\begin{proof}[Proof of Theorem~\ref{theo:main}]
We restrict ourselves to the case $d \leq  n$ since otherwise $f_k(\Pi_d P_n) = f_k(P_n)$, $f_k(\Pi_d P_{n+1}) = f_k(P_{n+1})$, and the required inequality becomes trivial.
So, let $d\geq 2$ and $0\leq k < d\leq n$.

Affentranger and Schneider~\cite[Eqn.~(4) on page~222]{AS92}, see also~\cite[Theorem 8.3.1]{SW08}, proved the following formula for the expected number of $k$-faces of a random projection of an arbitrary polytope $P$:
\begin{equation}\label{eq:affentranger_schneider}
\E f_k(\Pi_d P) = 2\sum_{s=0}^\infty \sum_{F\in \cF_k(P)} \sum_{G\in \cF_{d-2s-1}(P)} \beta(F,G) \gamma(G,P).
\end{equation}
We recall from Section~\ref{subsec:angles} that $\gamma(G,P)$ stands for the external angle of $G$, while $\beta(F,G)$ is the internal angle of $F$ relative to $G$.
By definition, the terms for which $F$ is not a face of $G$, vanish, as do the terms for which $d-2s-1<0$.

We use formula~\eqref{eq:affentranger_schneider} for $P=P_n$,  one of the regular $n$-dimensional polytopes defined in Section~\ref{subsec:reg_poly}. To prove Theorem~\ref{theo:main}, it suffices to show that  the quantity
\begin{equation}\label{eq:s_n_j}
s_n(j) := \sum_{F\in \mathcal F_k(P_n)} \sum_{G\in \mathcal F_{j-1}(P_n)} \beta(F,G) \gamma(G,P_n)
\end{equation}
is non-decreasing in $n$ for every $j:= d-2s\in \{1,\ldots,d\}$ and strictly increasing for $j=d$.

We notice that any $k$-face of a regular simplex (respectively, cube) is isometric to a $k$-dimensional regular simplex (respectively, cube). On the other hand, the $k$-faces of the $n$-dimensional regular crosspolytope are isometric copies of regular simplices, for $0\leq k\leq n-1$. 
Let $c_{m,\ell}$ be the number of $\ell$-faces of any $m$-dimensional face of $P_n$. In particular, $c_{n,\ell}$ is the number of $\ell$-faces of $P_n$ itself.
It is easy to provide an explicit formula for $c_{m,\ell}$, but we shall not need it.

The number of pairs $F\subset G$ such that $F\in \mathcal F_k(P_n)$ and $G\in \mathcal F_{j-1}(P_n)$ is given by $c_{n,j-1} c_{j-1,k}$. Indeed, first choosing $G$ as a face of $P_n$ contributes $c_{n,j-1}$, while choosing $F$ as a face of $G$  in a second step results in the factor $c_{j-1,k}$.
Since the polytope $P_n$ is regular, every pair $F\subset G$ as above can be transformed into any other such pair by an isometry of $P_n$.
In particular, every pair can be transformed into the ``canonical'' pair $Q_{k,n}\subset Q_{j-1,n}$, where
$$
Q_{i,n} =
\begin{cases}
\conv(e_1,\ldots,e_{i+1}) \subset P_n,  \qquad\qquad \text{for simplices and crosspolytopes},\\
\{(x_1,\ldots,x_{i}, 0,\ldots,0)\colon x_1,\ldots,x_i\in [0,1]\} \subset [0,1]^n, \qquad\,\, \text{ for cubes}.
\end{cases}
$$
Since isometries leave both internal and external angles invariant, it follows that
$$
s_n(j) = c_{n,j-1} c_{j-1,k} \beta(Q_{k,n},Q_{j-1,n}) \gamma(Q_{j-1,n},P_n).
$$
Observe that $j-1< d \leq n$ and the quantity $c_{j-1,k}\beta(Q_{k,n},Q_{j-1,n})$ does not depend on $n$. Also, it is non-zero for $j=d$. To prove the theorem, it therefore suffices to show that
\begin{equation}\label{eq:IntermediateStepProof}
c_{n,j-1} \gamma(Q_{j-1,n},P_n) \leq c_{n+1,j-1} \gamma(Q_{j-1,n+1},P_{n+1})
\end{equation}
for all $j\in\{1,\ldots,d\}$ with a strict inequality for $j\neq 1$.

Now, we multiply both sides of \eqref{eq:IntermediateStepProof} by the $(j-1)$-dimensional volume of $Q_{j-1,n}$ (which is isometric to $Q_{j-1, n+1}$). Note that $\text{Vol}_{j-1}(Q_{j-1,n})\neq 0$ because $j-1< d \leq n$.  We arrive at
$$
c_{n,j-1} \text{Vol}_{j-1}(Q_{j-1,n}) \gamma(Q_{j-1,n},P_n)  \leq c_{n+1,j-1} \text{Vol}_{j-1}(Q_{j-1,n+1}) \gamma(Q_{j-1,n+1},P_{n+1}).
$$
Recall that $\gamma(Q_{j-1,n},P_n)$ is the external angle of $P_n$ at any $(j-1)$-dimensional face, and that $c_{n,j-1}$ is the number of such faces. Therefore, using the representation \eqref{eq:IntrinsicVolumePolytope} of intrinsic volumes, the above inequality can be re-written as
$$
V_{j-1}(P_n) \leq V_{j-1} (P_{n+1})
$$
for all $j\in \{1,\ldots,d\}$ with a strict inequality for $j\neq 1$,  where $V_{j-1}$ is the $(j-1)$-st intrinsic volume. But we have $P_{n}\subset P_{n+1}$ and, as discussed in Section~\ref{subsec:IntVol}, the intrinsic volumes are monotone with respect to set inclusion. In fact, since the inclusion $P_{n}\subset P_{n+1}$ is strict and since $\dim(P_n)=n$ and $\dim(P_{n+1})=n+1>d > j-1$, we even have $V_{j-1}(P_n) < V_{j-1} (P_{n+1})$ for all $j\in \{2,\ldots,d\}$ by Lemma~\ref{lem:Leichtweiss}. This completes the argument.
\end{proof}

\section{Corollaries and extensions}\label{sec:extensions}

\subsection{Gaussian versions}
Let $X_1,\ldots,X_n$ be independent random vectors with standard Gaussian distribution on $\R^d$.
Recall that the Gaussian polytope $\cP_{n,d}$ is defined as the convex hull of $X_1,\ldots,X_n$.
As discussed in the introduction, from the work of Baryshnikov and Vitale~\cite{baryshnikov_vitale} it is known that $\cP_{n,d}$ has, up to an affine transformation, the same distribution as the Goodman--Pollack polytope $\Pi_d P_{n-1}$, where $P_{n-1}$ is a regular simplex with $n$ vertices. As a consequence,
$$
\E f_k(\cP_{n,d})=\E f_k(\Pi_d P_{n-1}) \text { for all } k\in\{0,\ldots,d-1\},
$$
which leads to the following corollary of Theorem~\ref{theo:main}:

\begin{theorem}\label{theo:main_gauss_poly}
We have $\E f_k(\mathcal P_{n,d}) <  \E f_k(\mathcal P_{n+1,d})$ for all $n\geq 1$, $d \geq 2$, and $0 \leq k <  \min (n-1,d)$.
\end{theorem}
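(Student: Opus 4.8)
The plan is to reduce Theorem~\ref{theo:main_gauss_poly} directly to Theorem~\ref{theo:main} using the Baryshnikov--Vitale identification of the Gaussian polytope with the Goodman--Pollack model. First I would recall, as stated in the introduction and in the paragraph preceding the statement, that for a standard Gaussian sample $X_1,\ldots,X_n$ in $\R^d$ the convex hull $\cP_{n,d}=\conv(X_1,\ldots,X_n)$ has, up to an affine transformation, the same distribution as $\Pi_d P_{n-1}$, where $P_{n-1}$ is the $(n-1)$-dimensional regular simplex with $n$ vertices. Since an affine transformation is a bijection on the face lattice of a polytope, it preserves $f_k$ for every $k$; hence $\E f_k(\cP_{n,d})=\E f_k(\Pi_d P_{n-1})$ for all $k\in\{0,\ldots,d-1\}$, and likewise $\E f_k(\cP_{n+1,d})=\E f_k(\Pi_d P_n)$.

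Next I would simply invoke Theorem~\ref{theo:main} with the regular simplex series and with the index $n$ there replaced by $n-1$: for all $n-1\geq 1$, $d\geq 2$, and $0\leq k<\min(n-1,d)$ we have $\E f_k(\Pi_d P_{n-1})<\E f_k(\Pi_d P_n)$. Combining this with the two equalities from the previous step yields $\E f_k(\cP_{n,d})<\E f_k(\cP_{n+1,d})$ precisely under the hypotheses $n\geq 2$, $d\geq 2$, $0\leq k<\min(n-1,d)$. It remains only to check the boundary case $n=1$: then $\cP_{1,d}$ is a single point, so $f_0(\cP_{1,d})=1$, while $\cP_{2,d}$ is a.s.\ a segment with $f_0=2$; and for $k\geq 1$ both sides vanish or the constraint $k<\min(n-1,d)=\min(0,d)=0$ is vacuous. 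So the claimed inequality for $n=1$ reduces to $1<2$ in the only relevant case $k=0$, which holds. This covers all $n\geq 1$, $d\geq 2$, $0\leq k<\min(n-1,d)$.

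There is essentially no obstacle here beyond bookkeeping of indices: the entire content lies in Theorem~\ref{theo:main}, which is already proved, and in the distributional identity of Baryshnikov and Vitale, which is cited. The one point that deserves a sentence of care is that the Baryshnikov--Vitale correspondence involves an affine (indeed, a projective-type or at least non-orthogonal) transformation, so one should explicitly note that it is a combinatorial isomorphism of polytopes and therefore $f$-vector preserving; this is what licenses passing from the Goodman--Pollack model to the Gaussian polytope at the level of expected $f$-vectors. The shift from $P_n$ in Theorem~\ref{theo:main} to $P_{n-1}$ here (a regular simplex with $n$ vertices being $(n-1)$-dimensional) is the only place an off-by-one error could creep in, so I would state it once, clearly, and then let the chain of (in)equalities run.
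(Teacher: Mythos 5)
Your proposal is correct and follows exactly the paper's route: invoke the Baryshnikov--Vitale identification of $\cP_{n,d}$ with $\Pi_d P_{n-1}$ (a regular simplex with $n$ vertices) to get $\E f_k(\cP_{n,d})=\E f_k(\Pi_d P_{n-1})$, then apply Theorem~\ref{theo:main} with the index shifted by one. Your extra discussion of the case $n=1$ is harmless but unnecessary, since for $n=1$ the condition $0\leq k<\min(n-1,d)=0$ is vacuous and there is nothing to prove.
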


We emphasize that Theorem~\ref{theo:main_gauss_poly} generalizes Beermann's result~\cite[Theorem 5.3.1]{beermann_diss} and its strengthened form~\cite[Theorem~1]{bonnet_etal} for the case $k=d-1$ to faces of arbitrary dimension.

\medskip

Similarly, if $P_n$ is the $n$-dimensional regular crosspolytope, then the expected number of $k$-dimensional faces of $\Pi_d P_n$ coincides with the expected number of $k$-faces of the  \textit{symmetric Gaussian polytope} $\cP^{\rm sym}_{n,d}$ defined by
$$
\cP^{\rm sym}_{n,d} = \conv (X_1,\ldots,X_n,-X_1,\ldots,-X_n).
$$
Although this fact is not explicitly stated in~\cite{baryshnikov_vitale}, it can be proved just by repeating the argument of~\cite{baryshnikov_vitale} (also compare with the remark after Corollary 1.1 in~\cite{boroczky_henk}). We can thus draw from Theorem~\ref{theo:main} the following conclusion:

\begin{theorem}\label{theo:main_gauss_poly_symm}
We have $\E f_k(\mathcal P^{\rm sym}_{n,d}) < \E f_k(\mathcal P^{\rm sym}_{n+1,d})$ for all $n\geq 1$, $d \geq 2$, and $0 \leq k <  \min (n,d)$.
\end{theorem}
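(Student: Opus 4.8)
The plan is to reduce Theorem~\ref{theo:main_gauss_poly_symm} to Theorem~\ref{theo:main} for the series of regular crosspolytopes, in the same way Theorem~\ref{theo:main_gauss_poly} was reduced to Theorem~\ref{theo:main} for the series of regular simplices. Let $P_n=\conv(e_1,\ldots,e_n,-e_1,\ldots,-e_n)$ be the $n$-dimensional regular crosspolytope. The one fact to be established is the distributional identity $\E f_k(\cP^{\rm sym}_{n,d})=\E f_k(\Pi_d P_n)$ for all $k\in\{0,\ldots,d-1\}$; granting it, the strict inequality of Theorem~\ref{theo:main} applied to the crosspolytope series transfers verbatim and yields $\E f_k(\cP^{\rm sym}_{n,d})<\E f_k(\cP^{\rm sym}_{n+1,d})$ for all $n\geq1$, $d\geq2$, $0\leq k<\min(n,d)$.

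To prove the identity I would simply repeat the argument of Baryshnikov and Vitale~\cite{baryshnikov_vitale}; here it is even a little cleaner, since central symmetry removes the need for any affine translation. Assemble the generators $X_1,\ldots,X_n$ of $\cP^{\rm sym}_{n,d}$ into the $d\times n$ matrix $A=[X_1\,|\,\cdots\,|\,X_n]$ with i.i.d.\ standard Gaussian entries, regarded as a linear map $A\colon\R^n\to\R^d$. Since the columns of $A$ are precisely $X_1,\ldots,X_n$, one has $\cP^{\rm sym}_{n,d}=A(P_n)$, a linear image of $P_n$. For $d>n$ we have $f_k(\cP^{\rm sym}_{n,d})=f_k(P_n)=f_k(\Pi_d P_n)$ almost surely, since $A$ is then a.s.\ injective and a generic $d$-dimensional subspace restricts injectively to the affine hull of $P_n$; so assume $d\leq n$, in which case almost surely $\rank A=d$. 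Writing $L\subseteq\R^n$ for the $d$-dimensional row space of $A$ and factoring $A=T\circ\Pi_L$, where $\Pi_L$ is the orthogonal projection onto $L$ and $T=A|_L\colon L\to\R^d$ is a linear isomorphism, we get $A(P_n)=T(\Pi_L P_n)$, which has the same face lattice, and hence the same $f$-vector, as the projection $\Pi_L P_n$. Finally, the law of $A$ is invariant under $A\mapsto AO$ for $O\in\mathrm{O}(n)$, so the law of $L$ is $\mathrm{O}(n)$-invariant and $L$ is therefore uniformly distributed on $G(n,d)$. Since $f_k(A(P_n))=f_k(\Pi_L P_n)$ pointwise, we conclude $\E f_k(\cP^{\rm sym}_{n,d})=\E f_k(\Pi_L P_n)=\E f_k(\Pi_d P_n)$, as required.

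With the identity in hand, the theorem follows immediately: Theorem~\ref{theo:main} applied to the regular crosspolytopes gives $\E f_k(\Pi_d P_n)<\E f_k(\Pi_d P_{n+1})$ in the range $n\geq1$, $d\geq2$, $0\leq k<\min(n,d)$, and substituting $\E f_k(\Pi_d P_m)=\E f_k(\cP^{\rm sym}_{m,d})$ on both sides completes the proof. The only obstacle is expository rather than mathematical: one has to state precisely in what sense $\cP^{\rm sym}_{n,d}$ and $\Pi_d P_n$ agree in law, namely modulo a random invertible linear transformation, which preserves every $f_k$, and to justify the rotational-invariance step identifying the row space of $A$ with a uniform random element of $G(n,d)$. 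Since~\cite{baryshnikov_vitale} carries out the analogous reduction for the regular simplex, no genuinely new difficulty arises.
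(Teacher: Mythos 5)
Your proposal is correct and follows the same route as the paper: the paper likewise reduces the statement to Theorem~\ref{theo:main} for the crosspolytope series via the identity $\E f_k(\cP^{\rm sym}_{n,d})=\E f_k(\Pi_d P_n)$, which it justifies by noting that the Baryshnikov--Vitale argument carries over verbatim. You simply spell out that argument (Gaussian matrix, uniform row space on $G(n,d)$, factorization through the projection preserving the face lattice), which the paper leaves as a citation.
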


Finally, we consider the \textit{Gaussian zonotope} $\mathcal Z_{n,d}$, which is defined to be the Minkowski sum of the segments $[0,X_1], \ldots, [0,X_n]$, that is,
$$
\mathcal Z_{n,d} = \left\{\sum_{i=1}^n \lambda_i X_i \colon \lambda_1,\ldots,\lambda_n \in [0,1]\right\}.
$$
The expected number of $k$-faces of $\mathcal Z_{n,d}$ is the same as the expected number of $k$-faces of $\Pi_d P_n$, where $P_n$ is an $n$-dimensional cube, a fact mentioned in~\cite[p.~146]{baryshnikov_vitale}. As above, Theorem~\ref{theo:main} yields the following result:

\begin{theorem}\label{theo:main_gauss_zonotope}
We have $\E f_k(\mathcal Z_{n,d}) < \E f_k(\mathcal Z_{n+1,d})$ for all $n\geq 1$, $d \geq 2$, and $0 \leq k <  \min (n,d)$.
\end{theorem}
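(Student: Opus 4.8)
The plan is to reduce Theorem~\ref{theo:main_gauss_zonotope} to Theorem~\ref{theo:main} via the identification of $\mathcal Z_{n,d}$ with a random projection of the $n$-dimensional cube. First I would recall from \cite[p.~146]{baryshnikov_vitale} the Goodman--Pollack-type description of the Gaussian zonotope: if $X_1,\ldots,X_n$ are i.i.d.\ standard Gaussian vectors in $\R^d$, then the $d\times n$ matrix whose columns are the $X_i$ has the same distribution as $R \cdot A$, where $R$ is the orthogonal projection from $\R^n$ onto a uniformly distributed $d$-dimensional subspace and $A$ is a fixed generic linear isomorphism (coming from a Gram--Schmidt/QR-type argument), so that the Minkowski sum $\sum_{i=1}^n[0,X_i]$ is, in distribution, the image under $\Pi_d$ of the parallelepiped $A([0,1]^n)$. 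Since an invertible linear map sends the cube $P_n=[0,1]^n$ to a combinatorially equivalent polytope and commutes with taking projections only up to the choice of subspace, the relevant point to record is the equality of face counts in law, $f_k(\mathcal Z_{n,d}) \eqdistr f_k(\Pi_d P_n)$, and hence $\E f_k(\mathcal Z_{n,d}) = \E f_k(\Pi_d P_n)$ for all $k\in\{0,\ldots,d-1\}$.

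Given this identity, the proof is immediate: for $0\le k<\min(n,d)$ we apply Theorem~\ref{theo:main} with $P_n$ the $n$-dimensional cube to obtain
$$
\E f_k(\mathcal Z_{n,d}) = \E f_k(\Pi_d P_n) < \E f_k(\Pi_d P_{n+1}) = \E f_k(\mathcal Z_{n+1,d}),
$$
where the middle strict inequality requires exactly the hypotheses $d\ge 2$ and $0\le k<\min(n,d)$ appearing in the statement, and the outer equalities use the distributional identification at levels $n$ and $n+1$. This establishes the claim.

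The only genuine content is the distributional identity, and the main (mild) obstacle is that it is not proved in detail in \cite{baryshnikov_vitale} but only asserted; so I would either cite it directly, or include a short lemma reproving it by the same linear-algebra argument used for the simplex: realize $[0,1]^n$ with vertices in $\R^n$, observe that the support function / face structure of the projection $\Pi_d[0,1]^n$ depends only on the images of the $n$ edge vectors $e_1,\ldots,e_n$ under the random projection, compute the joint law of these images, and check it agrees (up to the fixed linear map $A$) with that of $X_1,\ldots,X_n$. I expect no difficulty in the inequality step itself once the identification is in hand; the one point to be careful about is that the bound $k<\min(n,d)$ is inherited without loss, which it is, since Theorem~\ref{theo:main} is stated under precisely that range.
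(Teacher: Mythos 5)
Your proposal is correct and takes essentially the same route as the paper: the paper likewise invokes the fact from \cite[p.~146]{baryshnikov_vitale} that $\E f_k(\mathcal Z_{n,d}) = \E f_k(\Pi_d P_n)$ for $P_n$ the $n$-dimensional cube, and then applies Theorem~\ref{theo:main}. Your additional sketch of how to reprove the distributional identification is a reasonable supplement but not part of the paper's argument, which simply cites it.
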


\subsection{Poissonized versions}
Let us replace the fixed number $n$ of points by a Poisson random variable $N(t)$ with parameter $t>0$, that is,
$$
\P[N(t)=\ell]=\eee^{-t} \frac{t^\ell}{\ell!}, \quad \ell=0,1,\ldots
$$
We assume that $N(t)$ is independent of $X_1,X_2,\ldots$.  Then, it is natural to conjecture that the expected number of $k$-faces is monotone in $t$ for all three models, the Gaussian polytopes, the symmetric Gaussian polytopes and the Gaussian zonotopes. For faces of maximal dimension of Gaussian polytopes, the next theorem was proved by Beermann~\cite[Theorem~5.4.1]{beermann_diss}.

\begin{theorem}
For all $d\in\N$ and $0\leq k \leq  d$ the functions $t\mapsto \E f_k (\mathcal P_{N(t), d})$,  $t\mapsto \E f_k (\cP^{\rm sym}_{N(t), d})$ and $t\mapsto \E f_k (\mathcal Z_{N(t), d})$ are non-decreasing in $t>0$.
\end{theorem}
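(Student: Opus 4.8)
The plan is to deduce this from the monotonicity in the \emph{deterministic} parameter already established, by a routine Poissonization argument. I will carry it out for the Gaussian polytope; the symmetric Gaussian polytope and the Gaussian zonotope are handled in exactly the same way, using the respective identities $\E f_k(\cP^{\rm sym}_{\ell,d})=\E f_k(\Pi_d P_\ell)$ (crosspolytope) and $\E f_k(\mathcal Z_{\ell,d})=\E f_k(\Pi_d P_\ell)$ (cube) recalled in Section~\ref{sec:extensions}. Fix $d\in\N$ and $0\le k\le d$ and set $a_\ell:=\E f_k(\cP_{\ell,d})$ for $\ell=0,1,2,\dots$, with the convention $\cP_{0,d}=\varnothing$, so that $a_0=0$.

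The first step is to record that the sequence $(a_\ell)_{\ell\ge 0}$ is non-decreasing. For $\ell\ge 1$ the Baryshnikov--Vitale identity $\E f_k(\cP_{\ell,d})=\E f_k(\Pi_d P_{\ell-1})$ with $P_{\ell-1}$ a regular simplex turns this into the statement that $m\mapsto\E f_k(\Pi_d P_m)$ is non-decreasing, which is exactly the non-strict inequality of Remark~\ref{rem:non-strict}, valid for all $m\ge 1$, $d\ge 1$, $k\ge 0$. The boundary case $\ell=0$ and the degenerate case $k=d$ are checked by hand; for instance, for $k=d$ one has $a_\ell=\ind_{\{\ell\ge d+1\}}$, which is plainly non-decreasing.

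The second step is the conditioning identity: since $N(t)$ is independent of $X_1,X_2,\dots$, conditionally on $\{N(t)=\ell\}$ the polytope $\cP_{N(t),d}$ has the law of $\cP_{\ell,d}$, so that
$$
\E f_k(\cP_{N(t),d}) \;=\; \sum_{\ell=0}^\infty \eee^{-t}\,\frac{t^\ell}{\ell!}\,a_\ell \;=\; \E\, a_{N(t)}.
$$
A crude bound such as $a_\ell\le 2^\ell$ (a polytope with $\ell$ vertices has at most $2^\ell$ faces) shows this series converges for every $t>0$, so the quantity is finite and well defined.

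The third step proves monotonicity in $t$. The cleanest way is the coupling $N(t_2)\eqdistr N(t_1)+M$ for $0<t_1<t_2$, with $M$ an independent Poisson variable of mean $t_2-t_1$: since $M\ge 0$ almost surely and $(a_\ell)$ is non-decreasing, $a_{N(t_1)+M}\ge a_{N(t_1)}$ almost surely, and taking expectations gives $\E\,a_{N(t_2)}\ge\E\,a_{N(t_1)}$. Equivalently, differentiating the series term by term (justified by the exponential bound) yields $\frac{\dd}{\dd t}\,\E\, a_{N(t)}=\sum_{\ell=0}^\infty \eee^{-t}\frac{t^\ell}{\ell!}(a_{\ell+1}-a_\ell)\ge 0$. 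I do not anticipate any real obstacle here: the argument is soft, and the only points needing a little care are the bookkeeping for small $\ell$ (degenerate polytopes) and the remark that only the \emph{non-strict} form of the deterministic monotonicity is needed, which is precisely what Remark~\ref{rem:non-strict} supplies for every $k$ and $d$.
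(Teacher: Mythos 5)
Your proposal is correct and is essentially the paper's own argument: both reduce the statement to the non-strict monotonicity of $n\mapsto \E f_k(\cP_{n,d})$ (and its analogues) via a coupling of the Poisson variables at times $t_1<t_2$, using independence of $N(t)$ from the Gaussian sample. Your version, phrased through $\E\,a_{N(t)}$ with the increment coupling $N(t_2)\eqdistr N(t_1)+M$ (or term-by-term differentiation), is just a slightly streamlined form of the paper's conditioning on $\{N(t_1)=n_1,\ N(t_2)=n_2\}$.
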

\begin{proof}
We only prove the result for the Gaussian polytopes, since for the other two models the arguments are similar.

The proof is based on a so-called coupling argument. To start with, we let $\{N(t) \colon t\geq 0\}$ be a homogeneous Poisson process with intensity $1$ which is independent of everything else. In particular, for each $t>0$, $N(t)$ is a Poisson random variable with parameter $t$. Take some $0 < t_1 <t_2$. Then, $N(t_1) \leq N(t_2)$ a.s.\ For integers $0\leq n_1 \leq n_2$ consider the event $A(n_1,n_2) := \{N(t_1) = n_1, N(t_2) = n_2\}$. Since the Poisson process is independent of everything else, we have
$$
\E [f_k (\mathcal P_{N(t_i), d})\ind_{A(n_1,n_2)}]
=
\E [f_k (\mathcal P_{n_i, d})\ind_{A(n_1,n_2)}]
=
\E [f_k (\mathcal P_{n_i, d})] \P[A(n_1,n_2)]
$$
for $i=1,2$. We already know from Theorem~\ref{theo:main_gauss_poly} (see also Remark~\ref{rem:non-strict}) that $\E [f_k (\mathcal P_{n_1, d})] \leq \E [f_k (\mathcal P_{n_2, d})]$, whence
$$
\E [f_k (\mathcal P_{N(t_1) , d})\ind_{A(n_1,n_2)}] \leq \E [f_k (\mathcal P_{N(t_2), d})\ind_{A(n_1,n_2)}]
$$
for all $0\leq n_1\leq n_2$. Taking the sum over all such  $n_1,n_2$ yields
\begin{align*}
\E f_k(\cP_{N(t_1),d})
&=
\sum_{\substack{(n_1,n_2)\in\N^2_0\\ n_1\leq n_2}} \E [f_k (\mathcal P_{N(t_1), d})\ind_{A(n_1,n_2)}]\\
&\leq
\sum_{\substack{(n_1,n_2)\in\N^2_0\\ n_1\leq n_2}} \E [f_k (\mathcal P_{N(t_2), d})\ind_{A(n_1,n_2)}] = \E f_k(\cP_{N(t_2),d}),
\end{align*}
and the result is proved.
\end{proof}

\subsection{Other functionals of faces}
Let $P$ be a polytope.
For the Gaussian polytope in $\R^d$ Hug, Munsonius and Reitzner~\cite[Section 3.2]{HMR04} considered the functional
$$
T^{d,k}_{0,b} (P) = \sum_{F\in \cF_k(P)} (\text{Vol}_k(F))^b, \quad b\geq 0, \quad 0\leq k \leq \dim P,
$$
which reduces to $f_{k} (P)$ for $b=0$ and to the total $k$-dimensional volume of all $k$-faces of $P$ for $b=1$. In particular, $T_{0,1}^{d,d-1}(P)=2V_{d-1}(P)$ is the total surface area of $P$ and $T_{0,b}^{d,d}  = \Vol_d^b(P)$, but the other functionals of this form cannot be expressed in terms of intrinsic volumes of $P$. With the aim of the Blaschke--Petkantschin formula, they proved that
$$
\E T^{d,k}_{0,b} (\cP_{n,d}) = \E f_k ( \cP_{n,d})\, \cdot  \left(\frac{\sqrt {k+1}}{k!}\right)^b \prod_{j=1}^k \frac{\Gamma\left(\frac{d+b+1-j}{2}\right)}{\Gamma\left(\frac{d+1-j}{2}\right)}.
$$
While the (almost sure) monotonicity of $T_{0,1}^{d,d-1}(\cP_{n,d})$ in $n$ is a direct consequence of the monotonicity of the intrinsic volumes under set inclusion, the following non-trivial monotonicity result for the mean values of $T^{d,k}_{0,b} (\cP_{n,d})$ is a consequence of Theorem~\ref{theo:main_gauss_poly}:

\begin{theorem}
Let $b\geq 0$. We have $\E T^{d,k}_{0,b} (\cP_{n,d}) <  \E T^{d,k}_{0,b} (\cP_{n+1,d})$ for all $n\geq 1$, $d \geq 2$, and $0 \leq k <  \min (n-1,d)$.
\end{theorem}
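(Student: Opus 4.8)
The plan is to reduce the statement to Theorem~\ref{theo:main_gauss_poly} via the identity of Hug, Munsonius and Reitzner recalled just above. That identity expresses $\E T^{d,k}_{0,b}(\cP_{n,d})$ as $\E f_k(\cP_{n,d})$ times a constant
$$
c_{d,k,b} := \left(\frac{\sqrt{k+1}}{k!}\right)^b \prod_{j=1}^k \frac{\Gamma\!\left(\frac{d+b+1-j}{2}\right)}{\Gamma\!\left(\frac{d+1-j}{2}\right)}
$$
that depends only on $d$, $k$, $b$ and \emph{not} on $n$. Hence, once we know that $c_{d,k,b}\in(0,\infty)$ in the stated parameter range, the desired strict inequality $\E T^{d,k}_{0,b}(\cP_{n,d}) < \E T^{d,k}_{0,b}(\cP_{n+1,d})$ is exactly $c_{d,k,b}$ times the strict inequality $\E f_k(\cP_{n,d}) < \E f_k(\cP_{n+1,d})$, which is the content of Theorem~\ref{theo:main_gauss_poly} for $0\le k<\min(n-1,d)$.

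Two routine checks are needed. First, the Hug--Munsonius--Reitzner formula requires $k\le\dim\cP_{n,d}$; since $\dim\cP_{n,d}=\min(n-1,d)>k$ and $\dim\cP_{n+1,d}=\min(n,d)>k$ almost surely in our range, the formula applies to both polytopes. Second, $c_{d,k,b}$ is positive and finite: the prefactor $(\sqrt{k+1}/k!)^b$ is positive, and for every $j\in\{1,\dots,k\}$ one has $j\le k<d$, so $\frac{d+1-j}{2}>0$ and, because $b\ge 0$, also $\frac{d+b+1-j}{2}>0$; as $\Gamma$ is positive and finite on the positive half-line, each factor of the product lies in $(0,\infty)$. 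Combining these observations finishes the proof.

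I do not expect any genuine obstacle here: the whole argument is a one-line consequence of the cited identity together with Theorem~\ref{theo:main_gauss_poly}, the only care required being the elementary verification that the multiplicative constant is strictly positive (so that strictness of the inequality is preserved) and that the index range makes the identity legitimate. For orientation, note that for $b=1$ and $k=d-1$ the conclusion can also be read off from the monotonicity of intrinsic volumes under set inclusion, since $T^{d,d-1}_{0,1}=2V_{d-1}$; but for general $b$ and $k$ no such shortcut is available, and the monotonicity of $\E f_k$ provided by Theorem~\ref{theo:main_gauss_poly} is genuinely needed.
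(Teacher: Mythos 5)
Your argument is exactly the one the paper intends: the Hug--Munsonius--Reitzner identity factors $\E T^{d,k}_{0,b}(\cP_{n,d})$ as $\E f_k(\cP_{n,d})$ times a constant depending only on $d,k,b$, and strict monotonicity then follows from Theorem~\ref{theo:main_gauss_poly}. Your additional checks (positivity and finiteness of the constant, applicability of the identity in the stated index range) are correct and harmless, so the proposal is complete and matches the paper's proof.
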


A similar result can be obtained for the symmetric Gaussian polytope as well.

\section*{Acknowledgement}
Z.K.\ is grateful to Dmitry Zaporozhets for numerous discussions related to the topic of the paper.

This work was initiated during the Oberwolfach mini-workshop \textit{Perspectives in High-dimensional Probability and Convexity}. All support is gratefully acknowledged.

\bibliography{monotonicity_bib}
\bibliographystyle{plainnat}

\end{document}